\title[Adaptive Fixed Point Iterations]{Adaptive Fixed Point Iterations For Semilinear Elliptic Partial Differential Equations}
\author[M.~Amrein]{Mario Amrein}
\address{Applied University of Z\"urich, CH-8004 Switzerland}
\email{mario.amrein@hslu.ch}
\begin{document}
\normalem
\begin{abstract}
In this paper we study the behavior of finite dimensional fixed point iterations, induced by discretization of a continuous fixed point iteration defined within a Banach space setting. We show that the difference between the discrete sequence and its continuous analogue can be bounded in terms depending on the mesh size of the discretization and the contraction factor, defined by the continuous iteration. Furthermore, we show that the comparison between the finite dimensional and the continuous fixed point iteration naturally paves the way towards a general \emph{a posteriori} error analysis that can be used within the framework of a fully adaptive solution procedure. In order to demonstrate our approach, we use the Galerkin approximation of singularly perturbed semilinear monotone problems. Our scheme combines the fixed point iteration with an adaptive finite element discretization procedure (based on a robust \emph{a posteriori} error analysis), thereby leading to a fully adaptive Fixed-Point-Galerkin scheme. Numerical experiments underline the robustness and reliability of the proposed approach.
\end{abstract}

\keywords{Adaptive fixed point methods, a posteriori error analysis, strongly monotone problems,  semilinear elliptic problems, singularly perturbed problems, adaptive finite element methods.}

\subjclass[2010]{62F35, 35J61, 65N30, 65L11}

\maketitle

\section{Introduction}

In this work we study the numerical approximation of problems given by: 
\begin{equation}\label{eq:problem}
\text{find} \quad u \in X: \qquad \F(u)=0, \quad \text{in} \quad X',
\end{equation}

where $\F:X \rightarrow X'$ signifies a possibly nonlinear operator. 
Here, $X$ stands for a real Hilbert space, with inner product denoted by $(\cdot,\cdot)_{X}$ and induced norm $\norm{x}_{X}=~\sqrt{(x,x)_{X}}$. Furthermore, $X'$ signifies the dual of $X$.

\subsubsection*{Fixed Point Galerkin Methods}
As a result of the possible nonlinearity of $\F$, for a given $u^{0}\in X$, we consider the fixed point iteration
\begin{equation}\label{eq:continuous-iteration}
(u^{n+1},v)_{X}=\B(u^{n})(v), \quad \forall v \in X,
\end{equation}
where, for a fixed $t>0$, the operator $\B$ is defined by
\begin{equation}\label{eq:continuous}
\B(u)(v):=(u,v)_{X}- t\cdot \F(u)(v), \qquad \forall v\in X.
\end{equation}

To guarantee the convergence of the above iteration, we suppose that $\F$ satisfies
the following two assumptions, namely the Lipschitz continuity property
\begin{equation}
\label{eq:A1}\tag{A1}
\abs{\F(x)(v)-\F(y)(v)}\leq L\norm{x-y}_{X}\norm{v}_X,\qquad L>0,
\end{equation}
as well as the strong monotonicity property
\begin{equation}\label{eq:A2}\tag{A2}
(\F(x)-\F(y))(x-y)\geq c\norm{x-y}_{X}^2, \qquad c>0.
\end{equation}

Under these assumptions, it is well known that there exists a unique solution $u \in X$ solving \eqref{eq:problem} (see, e.g. \cite{12}, or section \ref{sec:errors} in this work). More precisely, it can be shown that the operator defined in \eqref{eq:continuous} is contractive, i.e., there holds  
\begin{equation}\label{eq:contraction}
\abs{\B(x)(v)-\B(y)(v)}\leq \alpha \norm{x-y}_{X}\norm{v}_{X}, \quad \alpha= \sqrt{1-\left(\frac{c}{L}\right)^2}\in (0,1),
\end{equation}
and therefore, based on Banach's fixed point Theorem, the solution can be obtained by iterating \eqref{eq:continuous-iteration} with optimal step size $t_{\text{opt}}=\frac{c}{L^2}$ (see also \cite{CongreveWihler:15}).  

However in actual computations, we can only solve a finite dimensional analogue of equation \eqref{eq:continuous-iteration}.
More precisely, let $X_{h}\subset X$ be a linear finite dimensional subspace of $X$. 
We then observe
\[
\abs{\B(x_h)(v_h)-\B(y_h))(v_{h})}\leq \alpha_{h} \norm{x_{h}-y_{h}}_{X}\norm{v_{h}}_{X},
\]
with $0<\alpha_{h}\leq \alpha $ from where we get the existence of a unique $x_{h}\in X_{h}$ such that $ \B(x_h)(v_h)=(x_h,v_{h})_{X} \ \forall v_{h}~\in~X_{h}$ and solving 
\[
\F(x_h)(v_{h})=0, \quad \forall v_{h}\in X_{h}.
\]	

For a given initial value $u_{0}^{h}\in X_{h}$, the solution can be obtained by the \emph{fixed point iteration} 

\begin{equation}
\label{eq:discrete_iter}
(u_{h}^{n+1},v_{h})_{X}=\B(u_{h}^{n})(v_{h}), \quad \forall v_{h}\in X_{h}.
\end{equation}

In order to establish a possible \emph{a priori} error analysis, we will focus (following the argument from~\cite[\S 8.1]{5} for Newton's method) on the 
distance, between the sequence $\{u^{n}\}_{n\geq 0}$, corresponding to the iteration from \eqref{eq:continuous-iteration}, and its discrete analogue $\{u_h^{n}\}_{n\geq 0}$ defined in \eqref{eq:discrete_iter}.
More precisely: we will show that the discrete sequence tracks its continuous analogue with a maximal distance, which can be bounded in terms depending on the mesh size parameter $h>0$ and the contraction factor $\alpha \in (0,1) $ defined in \eqref{eq:contraction}.  
As it turns out, the above outlined approach naturally paves the way towards an \emph{a posteriori} error analysis, where two different error indicators contribute to the \emph{a posteriori} error bound. 
They are caused by the nonlinearity of the problem and its discretization.

Based on these two error contributions, and following along the lines of \cite{CongreveWihler:15}, we formulate an adaptive procedure.
More specifically: as long as our adaptive porcedure is running, we either perform a fixed point iteration or refine the current space $X_{h}$ based on the derived error indicators.
In order to test such a fully adaptive procedure, we concentrate on semilinear elliptic boundary value problems with possible singular perturbations.

\subsubsection*{Linearization schemes}
Let us briefly address two possible strategies when solving nonlinear problems numerically. Firstly, the nonlinear problem can be formulated within an accurate finite dimensional framework. Based on a suitable iterative scheme, the resulting nonlinear finite dimensional problem will be linearized. Alternatively, a local linearization is applied. This leads to a sequence of linear problems which afterwards will be discretized by some suitable numerical approximation schemes. It is noteworthy that the second approach offers the application of the existing numerical analysis and the computational techniques for \emph{linear} problems (such as e.g. the development of classical residual-based error bounds). The concept of approximating infinite dimensional nonlinear problems by appropriate \emph{linear discretization schemes} has been studied by several authors in the recent past. For example, the approach presented in~\cite{CongreveWihler:15} (see also the work~\cite{GarauMorinZuppa:11,ChaillouSuri:07}) combines fixed point linearization methods and Galerkin approximations in the context of strictly monotone problems. Similarly, in~\cite{WihlerHouston:16,El-AlaouiErnVohralik:11,ErVo13,AmreinMelenkWihler:16,AmreinWihlerPseudo:16,AmreinWihler:15}, the nonlinear PDE problems at hand are linearized by an (adaptive) Newton technique, and subsequently discretized by a linear finite element method. On a related note, the discretization of a sequence of linearized problems resulting from the local approximation of semilinear evolutionary problems has been investigated in~\cite{AmreinWihlerTime:15}. In all of the works~\cite{AmreinMelenkWihler:16,AmreinWihlerPseudo:16	,AmreinWihler:15,AmreinWihlerTime:15,CongreveWihler:15}, the key idea in obtaining fully adaptive discretization schemes is to provide a suitable interplay between the underlying linearization procedure and (adaptive) Galerkin methods; this is based on investing computational time into whichever of these two aspects is currently dominant.

\subsubsection*{Outline}
The outline of this paper is as follows. In Section \ref{sec:errors} we study fixed point iterations within the context of general Hilbert spaces and derive an \emph{apriori} and \emph{a posteriori} error analysis. Subsequently, the purpose of Section~\ref{sc:Well-Posedness-FEM} is the discretization of the resulting sequence of {\em linear} problems by the finite element method and the development of an $\varepsilon$-robust \emph{a posteriori} error analysis. The final estimate (Theorem~\ref{thm:1}) bounds the error in terms of the (elementwise) finite element approximation (FEM-error) and the error caused by the fixed point iteration of the original problem. Then, in order to define a fully adaptive Fixed-Point-Galerkin scheme, we propose an interplay between the adaptive method and the adaptive finite element approach: More precisely, as the adaptive procedure is running, we either perform a fixed point iteration or refine the current finite element mesh based on the {\em a posteriori error} estimate (Section~\ref{sc:Well-Posedness-FEM}); this is carried out depending on which of the errors (FEM-error or fixed point error) is  more dominant in the present iteration step. In Section~\ref{sec:numerics} we provide a numerical experiment which shows that the proposed scheme is reliable and $\varepsilon$-robust for reasonable choices of initial guesses. Finally, we summarize and comment our findings in Section~\ref{sc:concl}.

\section{Apriori and a posteriori error estimates} 
\label{sec:errors}

First of all and with the purpose of completness, we recall the well known Banach's fixed point Theorem (see, e.g. \cite{Evans}), which asserts that any Lipschitz continuous map $\OB:X\rightarrow X$ with Lipschitz constant $\alpha \in (0,1) $, and operating over a Banach space $X$, possesses a unique fixed point. Furthermore, for a given initial guess $x_{0}\in X$, the fixed point can be obtained through iteration of $x_{n+1}=\OB(x_{n}) \quad n\geq 0$.  
Incidentally, there holds the following \emph{apriori} error estimate 
\begin{equation}
\label{eq:Banach-Apriori}
\norm{x-x_{n}}_{X}\leq \frac{\alpha^n}{1-\alpha}\norm{x_{0}-x_{1}}_{X}.
\end{equation}

We now show that the difference $u_{h}^{n}-u^{n}$ between the discrete sequence $\{u_{h}^{n}\}_{n\geq 0}$ and its continuous analogue $\{u^{n}\}_{n\geq 0}$ is bounded by the discretization error and the contraction constant $\alpha \in (0,1)$ . Indeed, there holds the following result:

\begin{theorem}
For given initial values $u_{h}^{0}= u^0\in X_{h}\subset X$ we consider the iterations 
\[
(u^{n+1},v)_{X}=\B(u^n)(v)\quad \forall v\in X, \qquad (u^{n+1}_{h},v_{h})_{X}=\B(u_{h}^n)(v_{h})\quad \forall v_{h}\in X_{h}.
\]
Assume that the disretization of $X$ is fine enough such that
\begin{equation}
\label{eq:fineness}
\abs{\B(u_{h}^n)(v)-(u_{h}^{n+1},v)_{X}}\leq \eta_{h}\norm{v}_{X}, \quad \forall v\in X.
\end{equation}

Then there holds 
\begin{equation}
\label{eq:distance}
\norm{u^{n+1}-u_{h}^{n+1}}_{X}\leq \frac{\eta_{h}}{1-\alpha} \quad \text{for all} \quad n\in \mathbb{N}.
\end{equation}
\end{theorem}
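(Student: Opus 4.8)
The plan is to control the error $e^{n}:=u^{n}-u_{h}^{n}$ through a one-step recursion and then sum the resulting geometric series. Since $u_{h}^{0}=u^{0}$, the induction starts from $\norm{e^{0}}_{X}=0$, and I would argue inductively in $n$.

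First I would test the difference of the two iterations against an arbitrary $v\in X$. Because $u_{h}^{n+1}\in X_{h}\subset X$, the quantity $(u_{h}^{n+1},v)_{X}$ is well defined for every $v\in X$, so subtracting the two variational identities gives
\begin{equation*}
(u^{n+1}-u_{h}^{n+1},v)_{X}=\B(u^{n})(v)-(u_{h}^{n+1},v)_{X}.
\end{equation*}
The decisive manipulation is to insert the intermediate term $\B(u_{h}^{n})(v)$ and split the right-hand side as
\begin{equation*}
\left(\B(u^{n})(v)-\B(u_{h}^{n})(v)\right)+\left(\B(u_{h}^{n})(v)-(u_{h}^{n+1},v)_{X}\right).
\end{equation*}
The first bracket is bounded by $\alpha\norm{e^{n}}_{X}\norm{v}_{X}$ via the contraction estimate~\eqref{eq:contraction}, while the second is bounded by $\eta_{h}\norm{v}_{X}$ precisely by the fineness hypothesis~\eqref{eq:fineness}.

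Choosing $v=e^{n+1}=u^{n+1}-u_{h}^{n+1}$ and dividing by $\norm{e^{n+1}}_{X}$ then yields the scalar recursion $\norm{e^{n+1}}_{X}\leq\alpha\norm{e^{n}}_{X}+\eta_{h}$. Unwinding this relation from $\norm{e^{0}}_{X}=0$ produces the partial geometric sum $\norm{e^{n+1}}_{X}\leq\eta_{h}\sum_{k=0}^{n}\alpha^{k}=\eta_{h}\frac{1-\alpha^{n+1}}{1-\alpha}$, which is dominated by $\frac{\eta_{h}}{1-\alpha}$ since $\alpha\in(0,1)$; this is exactly~\eqref{eq:distance}.

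The only genuinely delicate point is that the fineness assumption~\eqref{eq:fineness} must be available for \emph{all} $v\in X$ and not merely for discrete test functions $v_{h}\in X_{h}$. For $v_{h}\in X_{h}$ the defining identity of the discrete iteration makes the second bracket vanish identically, so~\eqref{eq:fineness} carries no information there; its entire content is to quantify the Galerkin residual over directions not contained in $X_{h}$, and it is this extension from $X_{h}$ to $X$ that couples the discrete and continuous iterations. Once~\eqref{eq:fineness} is granted, every remaining step is a routine application of the Cauchy--Schwarz inequality (implicit in testing with $v=e^{n+1}$) together with the contraction property, and no further regularity or compatibility is required.
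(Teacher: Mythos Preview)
Your proof is correct and follows essentially the same approach as the paper: the paper introduces an auxiliary element $\tilde{u}^{n+1,n}\in X$ via $(\tilde{u}^{n+1,n},v)_{X}=\B(u_{h}^{n})(v)$ and splits $u^{n+1}-u_{h}^{n+1}$ through $\tilde{u}^{n+1,n}$, which is exactly your insertion of $\B(u_{h}^{n})(v)$ written on the primal side, and both arguments arrive at the identical recursion $\norm{e^{n+1}}_{X}\le\alpha\norm{e^{n}}_{X}+\eta_{h}$ with $\norm{e^{0}}_{X}=0$. The only difference is cosmetic: the paper names the intermediate object because the bound $\norm{\tilde{u}^{n+1,n}-u_{h}^{n+1}}_{X}\le\eta_{h}$ is reused later in the \emph{a posteriori} analysis, whereas your version is slightly more streamlined for the present theorem alone.
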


\begin{proof}
We proceed along the lines of~\cite[\S 8.1]{5}, where the authors show a similar result for Newton's method. 

The argument is inductive. Therefore, given a sequence $\{\varepsilon_{k}\}_{k\geq 0}\subset \mathbb{R}_{\geq 0}$ and assuming that there holds 
\begin{equation}\label{eq:verankerung}
\norm{u^{n}-u_{h}^{n}}_{X}\leq \varepsilon_{n}.
\end{equation}
Since $u^{0}=u_{h}^{0}$, we can choose $\varepsilon_{0}=0$ for $n=0$. For $n\geq 0$, we further introduce sequences $\{\tilde{u}^{k,n}\}_{k\geq n}$ generated by the iteration
\begin{equation}\label{eq:plugin}
(\tilde{u}^{k+1,n},v)_{X}=\B(\tilde{u}^{k,n})(v), \quad \tilde{u}^{n,n}:=u_{h}^{n}, \quad v\in X.
\end{equation}
We note that these sequences start at the discrete points $u_{h}^{n}$ and are convergent to the unique zero $u^{\infty}$ of $\F$ (see Figure~\ref{Bild_Sequence}).

\begin{figure}
\includegraphics[width=0.75\textwidth]{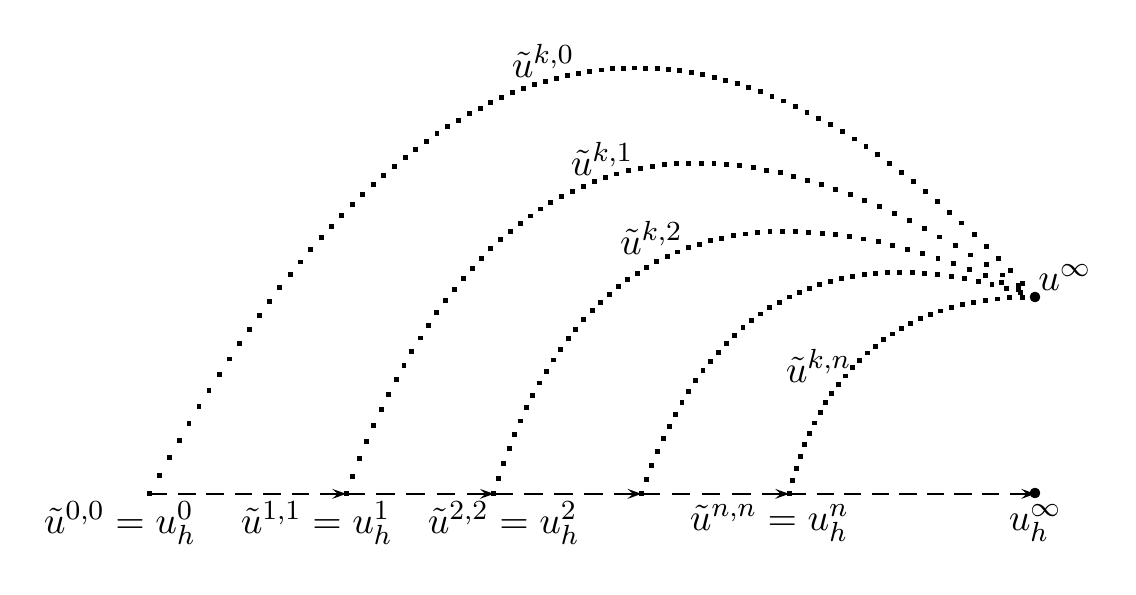}
\caption{The sequence given in \eqref{eq:plugin}.}
\label{Bild_Sequence}
\end{figure}

Induction step: employing the triangle inequality we get 
\begin{equation}
\label{eq:step1}	
\norm{u^{n+1}-u_{h}^{n+1}}_{X}\leq \norm{u^{n+1}-\tilde{u}^{n+1,n}}_{X}+\norm{\tilde{u}^{n+1,n}-u_{h}^{n+1}}_{X}.
\end{equation}
The first term can be estimated using \eqref{eq:verankerung} and the $\alpha$-Lipschitz continuity of $\B$:
\begin{align*}
\norm{u^{n+1}-\tilde{u}^{n+1,n}}_{X}^{2}&=(u^{n+1}-\tilde{u}^{n+1,n},u^{n+1}-\tilde{u}^{n+1,n})_{X}\\
&=(\B(u^{n})-\B(\tilde{u}^{n,n}))(u^{n+1}-\tilde{u}^{n+1,n})\\
&\leq \alpha \norm{u^{n}-\tilde{u}^{n,n}}_{X}\norm{u^{n+1}-\tilde{u}^{n+1,n}}_{X}\\
&= \alpha \norm{u^{n}-u_{h}^{n}}_{X}\norm{u^{n+1}-\tilde{u}^{n+1,n}}_{X}\\
&\leq  \alpha \varepsilon_{n}\norm{u^{n+1}-\tilde{u}^{n+1,n}}_{X},
\end{align*}
i.e., we have
\begin{equation}
\label{eq:epsilon}
\norm{u^{n+1}-\tilde{u}^{n+1,n}}_{X}\leq \alpha \varepsilon_{n}.
\end{equation}

For the second term $\norm{\tilde{u}^{n+1,n}-u_{h}^{n+1}}_{X}$ in \eqref{eq:step1} we use \eqref{eq:distance} and observe: 
\begin{equation}\label{eq:second-bound}
\begin{aligned}
\norm{\tilde{u}^{n+1,n}-u_{h}^{n+1}}_{X}^2=&(\tilde{u}^{n+1,n},\tilde{u}^{n+1}-u_{h}^{n+1})_{X}-\B(u_{h}^n)(\tilde{u}^{n+1,n}-u_{h}^{n+1})\\
&+\B(u_{h}^n)(\tilde{u}^{n+1,n}-u_{h}^{n+1})-(u_{h}^{n+1},\tilde{u}^{n+1,n}-u_{h}^{n+1})_{X}\\
\leq &\abs{\B(u_{h}^n)(\tilde{u}^{n+1,n}-u_{h}^{n+1})-(u_{h}^{n+1},\tilde{u}^{n+1,n}-u_{h}^{n+1})_{X}}\\
\leq &  \eta_{h}\norm{\tilde{u}^{n+1,n}-u_{h}^{n+1}}_{X},
\end{aligned}
\end{equation}
and therefore
\begin{equation}
\label{eq:discretization_error}
\norm{\tilde{u}^{n+1,n}-u_{h}^{n+1}}_{X}\leq \eta_{h}.
\end{equation}

Taking \eqref{eq:epsilon} into account we can bound \eqref{eq:step1} by
\[
\norm{u^{n+1}-u_{h}^{n+1}}_{X}\leq \alpha \varepsilon_{n}+\eta_{h}=:\varepsilon_{n+1}.
\]

We now consider the fixed point iteration (see Figure~\ref{Bild_iteration})
\begin{equation}
\label{eq:fixedpoint}
\varepsilon_{n+1}=\alpha \varepsilon_{n}+\eta_{h}, \quad \varepsilon_{0}=0.
\end{equation}

\begin{figure}
\includegraphics[width=0.53\textwidth]{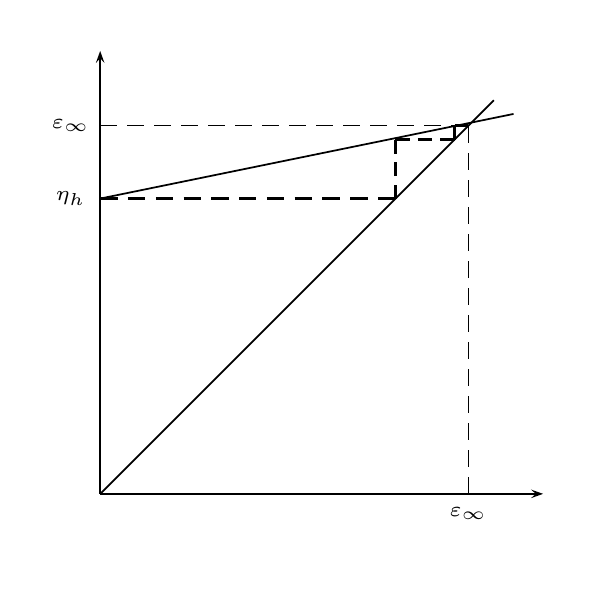}
\caption{The fixed point iteration given in \eqref{eq:fixedpoint}.}
\label{Bild_iteration}
\end{figure}

Since $\alpha \in (0,1)$ we conlude that $\varepsilon_{n}\leq \varepsilon_{\infty}=\frac{\eta_{h}}{1-\alpha}$ for all $n\geq 0$ which completes the inductive and therefore the whole argument.
\end{proof}

Based on this result, there holds the following \emph{apriori} bound:

\begin{cor}\label{cor2}
For any initial value $u^{0}\in X_{h}$ there holds the following \emph{apriori} error estimate:
\begin{equation}\label{eq:disc-apriori}
\norm{u-u_{h}^{n}}_{X}\leq \frac{1}{1-\alpha}\left(\alpha^{n}\norm{u^{1}-u^{0}}_{X}+\eta_{h}\right).
\end{equation}
\end{cor}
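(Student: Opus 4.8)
The plan is to interpolate between the exact solution $u$ of \eqref{eq:problem} and the discrete iterate $u_h^n$ by routing through the continuous iterate $u^n$, and then to invoke the two results already at hand. Concretely, I would begin with the triangle inequality
\[
\norm{u-u_h^n}_X \leq \norm{u-u^n}_X + \norm{u^n-u_h^n}_X,
\]
which splits the total error into a pure fixed-point (linearization) contribution $\norm{u-u^n}_X$ and a discretization contribution $\norm{u^n-u_h^n}_X$. The entire corollary then follows by bounding these two pieces separately with \eqref{eq:Banach-Apriori} and \eqref{eq:distance}, respectively, and collecting the common factor $1/(1-\alpha)$.

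For the discretization contribution I would simply read off the Theorem: estimate \eqref{eq:distance} gives $\norm{u^n-u_h^n}_X \leq \eta_h/(1-\alpha)$ for every $n\geq 1$, while the case $n=0$ is trivial because the common initial value $u^0=u_h^0$ forces $\norm{u^0-u_h^0}_X = 0 \leq \eta_h/(1-\alpha)$. Hence this contribution is controlled uniformly in $n$ by $\eta_h/(1-\alpha)$, with no further work required.

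For the fixed-point contribution the idea is to recognize $u$ as the unique fixed point of the self-map $\OB:X\to X$ implicitly defined through the Riesz representation $(\OB(w),v)_X = \B(w)(v)$, so that \eqref{eq:continuous-iteration} reads $u^{n+1}=\OB(u^n)$. The contraction estimate \eqref{eq:contraction} then transfers to $\norm{\OB(x)-\OB(y)}_X \leq \alpha\norm{x-y}_X$ upon taking the supremum over $v$ in the dual characterization of the norm, so Banach's a priori bound \eqref{eq:Banach-Apriori} applies with contraction factor $\alpha$ and initial guess $u^0$, yielding $\norm{u-u^n}_X \leq \frac{\alpha^n}{1-\alpha}\norm{u^1-u^0}_X$. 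Adding the two contributions produces exactly \eqref{eq:disc-apriori}.

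The only step demanding genuine care is conceptual rather than computational: verifying that this abstract map $\OB$ is legitimately a Banach contraction on $X$ and that its fixed point coincides with the solution of \eqref{eq:problem}. One checks that $u=\OB(u)$ is equivalent to $t\,\F(u)(v)=0$ for all $v\in X$, i.e.\ to $\F(u)=0$, using that $t>0$. Once this reduction to an honest contraction is secured, the corollary is an immediate synthesis of \eqref{eq:Banach-Apriori} and the Theorem, and I expect no additional estimation to be necessary.
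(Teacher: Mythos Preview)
Your proposal is correct and follows exactly the paper's approach: split via the triangle inequality $\norm{u-u_h^n}_X \leq \norm{u-u^n}_X + \norm{u^n-u_h^n}_X$, then apply \eqref{eq:Banach-Apriori} to the first term and \eqref{eq:distance} to the second. The additional remarks you make about $\OB$ being a genuine contraction with fixed point $u$ are sound but not spelled out in the paper, which simply cites the two estimates and combines them in one line.
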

\begin{proof}
By virtue of the triangle inequality and employing \eqref{eq:Banach-Apriori} together with \eqref{eq:distance} we obtain
\[
\norm{u-u_{h}^{n}}_{X}\leq \norm{u-u^{n}}_{X}+\norm{u^{n}-u_{h}^{n}}_{X}\leq \frac{1}{1-\alpha}\left(\alpha^{n}\norm{u^{1}-u^{0}}_{X}+\eta_{h}\right).
\]
\end{proof}

Based on the Lipschitz continuity of $ \F$ we readily infer
\[
\norm{\F(u_{h}^n)}_{X'}=\norm{\F(u)-\F(u_{h}^n)}_{X'}\leq L\norm{u-u_{h}^n}_{X}.
\]
Thus we can bound the residual $\F(u_{h}^n)$ as follows:
\begin{cor}\label{cor1}
There holds 
\[
\norm{\F(u_{h}^n)}_{X'}\leq \frac{L}{1-\alpha}\left(\alpha^n\norm{u^1-u^0}_{X}+\eta_{h}\right), \quad \norm{\F(x)}_{X'}:=\sup_{\norm{v}_{X}=1}{\F(x)(v)}.
\]
\end{cor}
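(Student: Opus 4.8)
The plan is to reduce the residual bound to the \emph{apriori} error estimate already established in Corollary~\ref{cor2}. The key observation is that the exact solution $u$ of \eqref{eq:problem} satisfies $\F(u)(v)=0$ for every $v\in X$, so the residual at the discrete iterate can be rewritten as the difference $\F(u_{h}^{n})(v)=\F(u_{h}^{n})(v)-\F(u)(v)$ and hence controlled by the distance $\norm{u-u_{h}^{n}}_{X}$. In other words, once the discrete iterate is known to be close to $u$, the strong Lipschitz structure \eqref{eq:A1} of $\F$ transports that closeness to the residual.

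Concretely, I would first fix an arbitrary $v\in X$ with $\norm{v}_{X}=1$ and invoke the Lipschitz assumption \eqref{eq:A1}, which gives
\[
\abs{\F(u_{h}^{n})(v)}=\abs{\F(u)(v)-\F(u_{h}^{n})(v)}\leq L\norm{u-u_{h}^{n}}_{X}\norm{v}_{X}=L\norm{u-u_{h}^{n}}_{X}.
\]
Since $\abs{\F(u_{h}^{n})(v)}$ dominates the signed quantity $\F(u_{h}^{n})(v)$, taking the supremum over all $v$ with $\norm{v}_{X}=1$ yields, by the definition of the dual norm, the bound $\norm{\F(u_{h}^{n})}_{X'}\leq L\norm{u-u_{h}^{n}}_{X}$. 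It then remains only to insert the \emph{apriori} estimate \eqref{eq:disc-apriori} of Corollary~\ref{cor2}, namely $\norm{u-u_{h}^{n}}_{X}\leq(1-\alpha)^{-1}\bigl(\alpha^{n}\norm{u^{1}-u^{0}}_{X}+\eta_{h}\bigr)$, which immediately produces the claimed estimate.

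I do not expect any genuine obstacle here, as the argument is a short chaining of the Lipschitz continuity of $\F$ with the previously derived \emph{apriori} bound; indeed, the first of the two displayed inequalities is already recorded in the text preceding the statement. The only point requiring a little care is the passage from the pointwise estimate in $v$ to the dual-norm estimate: one must observe that \eqref{eq:A1} is uniform in $v$, the factor $\norm{v}_{X}$ being explicit, so that the supremum over the unit sphere can be taken without altering the right-hand side.
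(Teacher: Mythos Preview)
Your proposal is correct and follows exactly the paper's approach: the paper records the inequality $\norm{\F(u_{h}^n)}_{X'}=\norm{\F(u)-\F(u_{h}^n)}_{X'}\leq L\norm{u-u_{h}^n}_{X}$ immediately before the corollary and then states the result without further proof, implicitly invoking Corollary~\ref{cor2}. Your write-up simply makes this chaining explicit.
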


\subsection{A posteriori error analysis}
Following along the lines of \cite{CongreveWihler:15}, we now want to exploit an \emph{a posteriori} error analysis for strictly monotone operators. In doing so, we first recall the well known Theorem of Zarantonello (see \cite{12}): 
\begin{theorem}
Suppose that $\F:X\rightarrow X'$ is $L$-Lipschitz and strongly monotone, with monotonicity constant $c$. Then there exists a unique zero for $\F$.
\end{theorem}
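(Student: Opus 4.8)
The plan is to reduce the existence and uniqueness of a zero of $\F$ to Banach's fixed point Theorem, thereby making rigorous the contraction property \eqref{eq:contraction} already announced in the introduction. First I would pass from the dual formulation to a self-map of $X$: for each fixed $u\in X$ the map $v\mapsto \B(u)(v)=(u,v)_{X}-t\F(u)(v)$ is a bounded linear functional on $X$ (since $\F(u)\in X'$ by hypothesis), so the Riesz representation Theorem furnishes a unique element $T_t u\in X$ satisfying $(T_t u,v)_{X}=\B(u)(v)$ for all $v\in X$. The key observation is that $u$ is a zero of $\F$ precisely when $\B(u)(v)=(u,v)_{X}$ for all $v$, i.e. exactly when $T_t u=u$. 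Hence it suffices to show that $T_t$ admits a unique fixed point for a suitable choice of the parameter $t>0$.

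The heart of the argument is the contraction estimate. For $x,y\in X$ I would expand the squared norm of the difference, using the definition \eqref{eq:continuous} of $\B$ together with the fact that the Riesz map is an isometry and reproduces the duality pairing:
\begin{equation*}
\norm{T_t x-T_t y}_{X}^2=\norm{x-y}_{X}^2-2t(\F(x)-\F(y))(x-y)+t^2\norm{\F(x)-\F(y)}_{X'}^2.
\end{equation*}
The strong monotonicity \eqref{eq:A2} bounds the middle term from below by $2tc\norm{x-y}_{X}^2$, while the Lipschitz property \eqref{eq:A1} bounds the last term from above by $t^2L^2\norm{x-y}_{X}^2$. Combining these yields
\begin{equation*}
\norm{T_t x-T_t y}_{X}^2\leq\bigl(1-2tc+t^2L^2\bigr)\norm{x-y}_{X}^2.
\end{equation*}

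It then remains to optimize over $t$. The quadratic $q(t)=1-2tc+t^2L^2$ attains its minimum at $t_{\mathrm{opt}}=c/L^2$ with value $1-(c/L)^2$, which is exactly the step size and contraction factor anticipated in \eqref{eq:contraction}; thus $T_{t_{\mathrm{opt}}}$ is Lipschitz with constant $\alpha=\sqrt{1-(c/L)^2}$. Combining \eqref{eq:A1}, \eqref{eq:A2} and Cauchy--Schwarz gives $c\norm{x-y}_{X}^2\leq(\F(x)-\F(y))(x-y)\leq L\norm{x-y}_{X}^2$, so $c\leq L$ and hence $\alpha\in[0,1)$. Banach's fixed point Theorem then produces a unique fixed point of $T_{t_{\mathrm{opt}}}$, equivalently the unique zero of $\F$, which is precisely the assertion. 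I do not expect a genuine obstacle here, since the result is classical; the only points requiring a little care are the well-definedness of the Riesz representative $T_t u$ and the verification that the contraction factor indeed lies in $[0,1)$, both of which are routine consequences of \eqref{eq:A1} and \eqref{eq:A2}.
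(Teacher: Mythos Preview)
Your proposal is correct and follows essentially the same route as the paper: both pull $\B$ back to a self-map of $X$ via the Riesz isometry, expand $\norm{T_t x-T_t y}_{X}^2$, apply \eqref{eq:A1}--\eqref{eq:A2} to obtain the quadratic $1-2ct+t^2L^2$, and optimize at $t=c/L^2$ to get the contraction constant $\sqrt{1-(c/L)^2}$ before invoking Banach's fixed point Theorem. Your explicit check that $c\leq L$ (hence $\alpha<1$) is a small addition the paper leaves implicit.
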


\begin{proof}
Let $\J(x)(y)=(x,y)_{X} $ denote the Riesz-Isometry $\J:X\rightarrow X'$. We show that $\OB:X\rightarrow X$ given by $\OB(x):=\J^{-1}(\B(x))$ is $\alpha$-contractive and therefore possesses a unique fixed point $u$ which is the desired zero for the operator $ \OF(x):=\J^{-1}(\F(x))$. 

We have 
\begin{align*}
\norm{\OB(x)-\OB(y)}_{X}^{2}=\norm{x-y}_{X}^{2}-2t(x-y,\OF(x)-\OF(y))_{X}+t^2\norm{\OF(x)-\OF(y)}_{X}^2.
\end{align*}
Notice that
\[
\norm{\OF(x)-\OF(y)}_{X}=\norm{\F(x)-\F(y)}_{X'}\leq L\norm{x-y}_{X},
\]
and
\[
(\F(x)-\F(y))(x-y)=\J(\OF(x)-\OF(y))(x-y)=(\OF(x)-\OF(y),x-y)_{X}\geq c\norm{x-y}_{X}^{2}.
\]
Thus we arrive at 
\[
\norm{\OB(x)-\OB(y)}_{X}^{2}\leq f(t)\cdot \norm{x-y}_{X}^{2},  
\]	
with $f(t):=1-2ct+(Lt)^2$.
Moreover we observe that for $t\in (0,\nicefrac{2c}{L^2}) $ we have $\sqrt{f(t)}<1$. Note that the optimal contraction constant is 
$\alpha_{\text{opt}}:=f(t_{\text{opt}})=\sqrt{1-\nicefrac{c^2}{L^2}}$ with $ t_{\text{opt}}:=\frac{c}{L^2}$.
\end{proof}

Based on this result there holds the following \emph{a posteriori} error estimate:

\begin{proposition}\label{aposteriori} If $\F:X\rightarrow X'$ satisfies \eqref{eq:A1} and \eqref{eq:A2}, then for $ e_{h}^{n+1}:=u-u_{h}^{n+1}$, there holds the a posteriori error bound:
\begin{equation}\label{eq:aposteriori}	
\norm{e_{h}^{n+1}}_{X}\leq \frac{L^2}{c^2}\cdot \eta_h+
\frac{L}{c}\left(1+\frac{L}{c}\right)\cdot \norm{u_h^{n+1}-u_{h}^{n}}_{X}.
\end{equation}
\end{proposition}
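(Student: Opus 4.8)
The plan is to reduce the whole estimate to a bound on the residual $\norm{\F(u_h^{n+1})}_{X'}$ via the monotonicity of $\F$, and then to control that residual using the fixed-point increment and the fineness hypothesis \eqref{eq:fineness}. First I would exploit that $u$ is the (unique) zero of $\F$, so that $\F(u)=0$. Inserting $x=u$ and $y=u_h^{n+1}$ into \eqref{eq:A2} gives
\[
c\norm{e_h^{n+1}}_X^2 \leq (\F(u)-\F(u_h^{n+1}))(e_h^{n+1}) = -\F(u_h^{n+1})(e_h^{n+1}) \leq \norm{\F(u_h^{n+1})}_{X'}\norm{e_h^{n+1}}_X,
\]
whence $\norm{e_h^{n+1}}_X \leq c^{-1}\norm{\F(u_h^{n+1})}_{X'}$. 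Thus the entire task collapses to estimating the residual of the computed iterate $u_h^{n+1}$.

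To estimate the residual I would split it, for arbitrary $v\in X$, as $\F(u_h^{n+1})(v) = (\F(u_h^{n+1})-\F(u_h^n))(v) + \F(u_h^n)(v)$. The first summand is handled at once by the Lipschitz property \eqref{eq:A1}, contributing $L\norm{u_h^{n+1}-u_h^n}_X\norm{v}_X$. The delicate term is $\F(u_h^n)(v)$, since the discrete iteration \eqref{eq:discrete_iter} gives information about $\F(u_h^n)$ only when tested against $v_h\in X_h$. This is where the fineness assumption does the essential work, and it is the step I expect to require the most care.

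The key computation uses the definition \eqref{eq:continuous} of $\B$ with the optimal step size $t=t_{\text{opt}}=\tfrac{c}{L^2}$, for which one observes
\[
\B(u_h^n)(v)-(u_h^{n+1},v)_X = (u_h^n-u_h^{n+1},v)_X - t\,\F(u_h^n)(v).
\]
Combining this identity with \eqref{eq:fineness} and the Cauchy--Schwarz inequality yields $t\,\abs{\F(u_h^n)(v)} \leq \bigl(\norm{u_h^{n+1}-u_h^n}_X + \eta_h\bigr)\norm{v}_X$, and dividing by $t=\tfrac{c}{L^2}$ gives $\abs{\F(u_h^n)(v)} \leq \tfrac{L^2}{c}\bigl(\norm{u_h^{n+1}-u_h^n}_X+\eta_h\bigr)\norm{v}_X$ for all $v\in X$. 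In effect, \eqref{eq:fineness} is exactly what upgrades the Galerkin relation valid on $X_h$ into a genuine residual bound on the full space $X$.

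Collecting the two contributions produces
\[
\norm{\F(u_h^{n+1})}_{X'} \leq \Bigl(L+\tfrac{L^2}{c}\Bigr)\norm{u_h^{n+1}-u_h^n}_X + \tfrac{L^2}{c}\,\eta_h,
\]
and a final division by $c$, using the factorization $\tfrac{1}{c}\bigl(L+\tfrac{L^2}{c}\bigr)=\tfrac{L}{c}\bigl(1+\tfrac{L}{c}\bigr)$ and $\tfrac{1}{c}\cdot\tfrac{L^2}{c}=\tfrac{L^2}{c^2}$, delivers exactly \eqref{eq:aposteriori}. Apart from the residual--monotonicity duality of the first step, the only substantive ingredient is the identity relating $\B(u_h^n)(v)-(u_h^{n+1},v)_X$ to $\F(u_h^n)(v)$; everything that follows is the triangle inequality and routine arithmetic.
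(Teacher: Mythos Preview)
Your argument is correct and follows essentially the same route as the paper: use strong monotonicity to reduce to a residual bound, split $\F(u_h^{n+1})$ into a Lipschitz term and $\F(u_h^{n})$, and control $\F(u_h^{n})$ via the identity coming from the definition of $\B$ together with the fineness hypothesis, all with $t=c/L^{2}$. The only cosmetic difference is that the paper carries out the same manipulation through the auxiliary element $\tilde u^{n+1,n}$ (the continuous fixed-point update started at $u_h^{n}$) and invokes the previously derived bound $\|\tilde u^{n+1,n}-u_h^{n+1}\|_X\le\eta_h$, whereas you bypass $\tilde u^{n+1,n}$ and appeal to \eqref{eq:fineness} directly; the algebra is identical.
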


\begin{proof}
We follow along the lines of \cite{CongreveWihler:15}. 
Recalling the coercivity of $\OF$ we get
\[
c\norm{e_{h}^{n+1}}_{X}^{2}\leq -(\OF(u_{h}^{n+1}),e_{h}^{n+1})_{X}.
\]
Next, we recall the sequences $\{\tilde{u}^{k,n}\}_{k\geq n}$ given in \eqref{eq:plugin}, i.e., we observe
\[
(\tilde{u}^{n+1,n},e_{h}^{n+1})_{X}=(\OB(\tilde{u}^{n,n}),e_{h}^{n+1})_{X}=(\OB(u_h^{n}),e_{h}^{n+1})_{X}=(u_{h}^{n},e_{h}^{n+1})_{X}-t(\OF(u_{h}^{n}),e_{h}^{n+1})_{X}.
\]
By virtue of the Cauchy-Schwarz inequality and the Lipschitz continuity of $\OF$ we get
\begin{align*}
&c\norm{e_{h}^{n+1}}_{X}^{2}\\
\leq & -(\OF(u_{h}^{n+1}),e_{h}^{n+1})_{X}+t^{-1}(\tilde{u}^{n+1,n}-u_{h}^{n},e_{h}^{n+1})_{X}+(\OF(u_{h}^{n}),e_{h}^{n+1})_{X}\\
=&t^{-1}(\tilde{u}^{n+1,n}-u_{h}^{n+1},e_{h}^{n+1})_{X}+t^{-1}(u_{h}^{n+1}-u_{h}^{n},e_{h}^{n+1})_{X}+(\OF(u_{h}^{n}),e_{h}^{n+1})_{X}-(\OF(u_{h}^{n+1}),e_{h}^{n+1})_{X}\\
\leq & t^{-1}\left(\norm{\tilde{u}^{n+1,n}-u_{h}^{n+1}}_{X}+\norm{u_{h}^{n+1}-u_{h}^{n}}_{X}\right)\norm{e_{h}^{n+1}}_{X}+L\norm{u_{h}^{n+1}-u_{h}^{n}}_{X}\norm{e_{h}^{n+1}}_{X}.
\end{align*}
Dividing by $c\norm{e_{h}^{n+1}}_{X}$ and using $t:=\frac{c}{L^2}$ we obtain
\[
\norm{e_{h}^{n+1}}_{X}\leq \frac{L^2}{c^2}\norm{\tilde{u}^{n+1,n}-u_{h}^{n+1}}_{X}+\frac{L}{c}\left(1+\frac{L}{c}\right)\norm{u_h^{n+1}-u_{h}^{n}}_{X}.
\]
Recalling \eqref{eq:discretization_error} we conclude \eqref{eq:aposteriori}.
\end{proof}

\section{Application to semilinear problems}
\label{sc:Well-Posedness-FEM}

\subsubsection{Problem formulation} In this section, we focus on the numerical approximation procedure for semilinear elliptic boundary value problems with possible singular perturbations. More precisely, for a fixed parameter $\varepsilon>0$ (possibly with $\varepsilon \ll 1$), and a continuous function $f:\mathbb{R}\rightarrow \mathbb{R}$, we consider the problem of finding $u:\Omega \rightarrow \mathbb{R}$ that satisfies
\begin{equation}\label{eq:semilinear}
\begin{aligned}
-\varepsilon \Delta u &= f(x,u), \quad &&\text{in} \  \Omega,\\
u&=0, \quad &&\text{on} \ 	\partial  \Omega.
\end{aligned}
\end{equation}

Here, $\Omega\subset\mathbb{R}^d$, with $d=1$ or $d=2$, is an open and bounded 1d interval or a 2d Lipschitz polygon, respectively. Problems of this type appear in a wide range of applications including, e.g., nonlinear reaction-diffusion in ecology and chemical models~\cite{CaCo03,Ed05,Fr08,Ni11,OkLe01}, economy~\cite{BaBu95}, or classical and quantum physics~\cite{BeLi83,St77}.

In this work, we are interested in a unique solution $u\in X:=H_{0}^{1}(\Omega) $ solving \eqref{eq:semilinear}: here, we denote by $H_{0}^{1}(\Omega)$ the standard Sobolev space of functions in $H^{1}(\Omega)=W^{1,2}(\Omega)$ with zero trace on $\partial \Omega$. Furthermore, the weak formulation of \eqref{eq:semilinear} is to find $u\in X:=H_{0}^{1}(\Omega)$ such that 
\[
\F_{\varepsilon}(u)(v)=0, \quad \forall v\in X,
\]
where 
\begin{equation}\label{eq:semilinearform}
\F_{\varepsilon}(u)(v):=\int_{\Omega}{\{\varepsilon \nabla u \nabla v-f(u)v\}\dx}.
\end{equation}
In addition, we introduce the inner product
\[
(u,v)_{X}:=\int_{\Omega}{\{uv+\varepsilon \nabla u \cdot \nabla v\}\dx},\qquad u,v\in X,
\]
with induced norm on $X$ given by
\[
\NN{u}_{\varepsilon,D}:=\Bigl(\varepsilon\norm{\nabla u}_{0,D}^2 +\norm{u}_{0,D}^2 \Bigr)^{\nicefrac{1}{2}},\qquad u\in H^1(D),
\]
where $\|\cdot\|_{0,D}$ denotes the $L^2$-norm on~$D$. Frequently, for~$D=\Omega$, the subindex~`$D$' will be omitted.
Note that in the case of $ f(u)=-u+g$, with $g \in L^{2}(\Omega)$, i.e., when \eqref{eq:semilinear} is linear and strongly elliptic, the norm $ \NN{\cdot}_{\varepsilon,\Omega}$ is a natural energy norm on $X$.

In what follows we shall use the abbreviation $ x\preccurlyeq y $ to mean $x\leq cy $, for a constant $c>0$ independent of the mesh size $h$ and of~$ \varepsilon>0$.

\subsubsection{Fixed-Point-Iteration} For $u^n\in X$ and $t>0$, the fixed-point iteration is to find $u^{n+1}\in X$ from $u^n$ such that
\begin{equation}\label{eq:1}
(u^{n+1},v)_{X}=\B_{\varepsilon}(u^n)(v), \qquad \forall v\in X,
\end{equation} 
where for fixed $u\in X$, $t>0$, we set
\begin{equation}
\label{eq:FixedPoint}
\B_{\varepsilon}(u)(v):=(u,v)_{X}-t\int_{\Omega}{\left\{\varepsilon \nabla u \cdot \nabla v-f(u)v\right\}\dx}.
\end{equation}

\begin{remark}
We assume that $f$ is Lipschitz continuous with Lipschitz constant $L_{f}$. Furthermore we need the following monotonicity property:
\begin{equation}
\label{eq:monoton}
(f(x)-f(y))(x-y)\leq -c_{f} (x-y)^2, \quad c_{f}>0.
\end{equation}
As a consequence, the operator $\F$ in \eqref{eq:semilinearform} is Lipschitz continuous and strongly monotone with Lipschitz constant $L=\max(1,L_{f})$ and monotonincity constant $c=\min(1,c_{f})$ respectively. In particular, applying the iteration given in \eqref{eq:1}, we obtain a unique fixed point $x\in X$ for $ \B_{\varepsilon}$ which is the unique root of $\F_{\varepsilon}$ given in \eqref{eq:semilinearform}.
\end{remark}

\subsection{Galerkin Discretization}
In order to provide a numerical approximation of~\eqref{eq:semilinear}, we will discretize the \emph{linear} weak formulation~\eqref{eq:1} by means of a finite element method, which constitutes a Fixed-Point-Galerkin approximation scheme. Furthermore, we shall derive {\em a posteriori} error estimates for the finite element 
discretization which allow for an adaptive refinement of the meshes in each iteration step. 
This, together with the \emph{a posteriori} error estimate from Proposition~\ref{aposteriori} leads to a fully adaptive Fixed-Point-Galerkin discretization method 
for~\eqref{eq:1}.

\subsubsection{Finite Element Meshes and Spaces}
Let $ \mathcal{T}^h=\{T\}_{T\in\mathcal{T}^h}$ be a regular and shape-regular mesh partition of $\Omega $ into disjoint open simplices, i.e., any~$T\in\mathcal{T}^h$ is an affine image of the (open) reference simplex~$\widehat T=\{\widehat x\in\mathbb{R}_+^d:\,\sum_{i=1}^d\widehat x_i<1\}$. By~$h_T=\mathrm{diam}(T)$ we signify the element diameter of~$T\in\mathcal{T}^h$, and by $h=\max_{T\in\mathcal{T}^h}h_T$ the mesh size. Furthermore, by $\mathcal{E}^h$ we denote the set of all interior mesh nodes for~$d=1$ and interior (open) edges for~$d=2$ in~$\mathcal{T}^h$. In addition, for~$T\in\mathcal{T}^h$, we let~$\mathcal{E}^h(T)=\{E\in\mathcal{E}^h:\,E\subset\partial T\}$. For~$E\in\mathcal{E}^h$, we let~$h_E$ be the mean of the lengths of the adjacent elements in 1d, and the length of~$E$ in~2d. Let us also define the following two quantities:
\begin{equation}\label{boundary}
\begin{aligned}
\alpha_T&:=\min(1,\varepsilon^{-\nicefrac12}h_T),\qquad
\alpha_E:=\min(1,\varepsilon^{-\nicefrac12}h_E),
\end{aligned}
\end{equation}
for~$T\in\mathcal{T}^h$ and~$E\in\mathcal{E}^h$, respectively.

We consider the finite element space of continuous, piecewise linear functions on $\mathcal{T}^h$ with zero trace on~$\partial\Omega$, given by
\begin{equation*}
V_{0}^{h}:=\{\varphi\in H^1_0(\Omega):\,\varphi|_{T} \in \mathbb{P}_{1}(T) \, \forall T \in \mathcal{T}^h\},
\end{equation*}
respectively, where~$\mathbb{P}_1(T)$ is the standard space of all linear polynomial functions on~$T$.

\subsubsection{Linear Finite Element Discretization}
For $t=\frac{c}{L^2}$ and $u_{h}^n \in V_{0}^h, n\geq 0$, we consider the finite element approximation of \eqref{eq:1}, which is to find $ u_{h}^{n+1}\in V_{0}^{h}$ such that 
\begin{equation}\label{eq:2}
(u_{h}^{n+1},v_{h})_{X}=\B_{\varepsilon}(u_{h}^n)(v_{h}), \quad \forall v_{h}\in V_{0}^h,
\end{equation}
where, for a fixed $u_{h}\in V_{0}^{h}$, 
\[
\B_{\varepsilon}(u_{h})(v_{h})=(u_{h},v_{h})_{X}-t\int_{\Omega}{\left\{\varepsilon \nabla u_{h} \cdot \nabla v_{h}-f_{h}(u_{h})v_{h}\right\}}\dx.
\]
Here, $f_{h}(u_{h})\in V_{h}$ is defined through
\[
\int_{\Omega}{(f(u_{h})-f_{h}(u_{h}))v_{h}\dx}=0,\quad \forall v_{h}\in V_{0}^h.
\]
More precisely, if $V_{0}^{h}$ is spanned by the basis functions $\{\phi_{i}\}_{i=1}^{N}$ we solve the algebraic system
\begin{equation}
\label{eq:algebraic}
\sum_{k =1}^{N}{B_{ki}u_{k}^{n+1}}=\sum_{k =1}^{N}{B_{ki}u_{k}^{n}}-t\varepsilon \sum_{k =1}^{N}{A_{ki}u_{k}^{n}}+t b(u_{h}^{n})_{i}, \quad i\in\{1,\ldots,N\},
\end{equation}
with respect to $\{u_{k}^{n+1}\}_{k=1}^{N}$ and set $u_{h}^{n+1}=\sum_{k=1}^{N}{u_{k}^{n+1}\phi_{k}}$. 
In \eqref{eq:algebraic}, $B$ signifies the iteration matrix given by $B_{ik}=(\phi_{i},\phi_{k})_{X}$ and $A$ denotes the stiffness matrix. Moreover, for $ i=\{1,\ldots,N\} $ the load vector is given by $b(u_{h}^n)_{i}:=\int_{\Omega}{f(u_{h}^{n})\phi_{i}\dx}$.

\subsection{A Posteriori Analysis}
The aim of this section is to derive \emph{a posteriori} error bounds for the FEM iteration \eqref{eq:2}. In view of Proposition~\ref{aposteriori} it is sufficient to derive a computable quantity $\eta(u_{h}^{n+1},V_{0}^h) $.

Therefore we introduce the quantity:
\[
\tilde{e}_{h}^{n+1,n}:=\tilde{u}^{n+1,n}-u_{h}^{n+1}.
\]
Moreover, let $\I:\,H_{0}^{1}(\Omega)\rightarrow V_{0}^{h} $ be the quasi-interpolation operator of Cl\'ement (see, e.g., \cite[Corollary~4.2]{AmreinWihler:15}) and set $v_{h}:=\I \tilde{e}_{h}^{n+1,n} $.

We observe
\begin{align*}
\NN{\tilde{e}_{h}^{n+1,n}}_{\varepsilon}^{2}&=(\tilde{u}^{n+1,n},\tilde{e}_{h}^{n+1,n})_{X}-(u_{h}^{n+1},\tilde{e}_{h}^{n+1,n})_{X}\\
&=(\tilde{u}^{n+1,n},\tilde{e}_{h}^{n+1,n}-v_{h})_{X}-(u_{h}^{n+1},\tilde{e}_{h}^{n+1,n}-v_{h})_{X}\\
&=-t\int_{\Omega}{\left\{\varepsilon \nabla u_{h}^{n}\nabla(\tilde{e}_{h}^{n+1,n}-v_{h})-f(u_{h}^n)(\tilde{e}_{h}^{n+1,n}-v_{h})\right\}\dx}-(u_{h}^{n+1}-u_{h}^{n},\tilde{e}_{h}^{n+1,n}-v_{h})_{X}\\
&=-\sum_{T\in\T_{h}}\int_{T}{\left\{\varepsilon \nabla (u_{h}^{n+1}-u_{h}^{n}) \nabla (\tilde{e}_{h}^{n+1}-v_{h})+t\varepsilon \nabla u_{h}^{n}\nabla (\tilde{e}_{h}^{n+1,n}-v_{h})\right\}\dx}\\
&\quad +\sum_{T\in\T_{h}}\int_{T}{\left\{tf_{h}(u_{h}^{n})(\tilde{e}_{h}^{n+1,n}-v_{h})-(u_{h}^{n+1}-u_{h}^{n})(\tilde{e}_{h}^{n+1,n}-v_{h})\right\} \dx}\\
&\quad +\sum_{T\in\T_{h}}\int_{T}{t(f(u_{h}^{n})-f_{h}(u_{h}^{n}))(\tilde{e}_{h}^{n+1,n}-v_{h})\dx.} 
\end{align*}

Integrating by parts in the first term on the right-hand side, recalling the fact that~$(v-\I v)=0$ on~$\partial\Omega$, and applying some elementary calculations, yields that

\[
\NN{\tilde{e}_{h}^{n+1,n}}_{\varepsilon}^{2}=\sum_{T\in\T_{h}}{(b_{T}+c_T)}+\sum_{E\in\E_{h}}{a_{E}}
\]
where
\begin{align*}
b_{T}&:=\int_{T}{\left\{(\varepsilon \Delta (u_{h}^{n+1}-u_{h}^{n})-(u_{h}^{n+1}-u_{h}^{n})+t\varepsilon \Delta u_{h}^{n} +tf_h(u_{h}^{n}))(\tilde{e}_{h}^{n+1,n}-v_{h})\right\}\dx},\\
a_{E}&:=\int_{\partial E}{\varepsilon \jmp{\nabla (u_{h}^{n+1}-u_{h}^{n})+t\nabla u_{h}^{n}}(\tilde{e}_{h}^{n+1,n}-v_{h})\ds},\\
c_{T}&:=\int_{T}{t(f(u_{h}^{n})-f_{h}(u_{h}^{n}))(\tilde{e}_{h}^{n+1,n}-v_{h})\dx.} 
\end{align*}

with~$E\in\mathcal{E}^h$, $T\in\mathcal{T}^h$. Here, for any edge $ E=\partial T^\sharp\cap \partial T^\flat \in \mathcal{E}^h $ shared by two neighboring elements~$T^\sharp, T^\flat\in\mathcal{T}^h$, where $\bm n^\sharp$ and~$\bm n^\flat$ signify the unit outward vectors on~$\partial T^\sharp$ and~$\partial T^\flat$, respectively, we denote by 
\[
\jmp{\nabla u_{h}^{n+1}}(\bm x)=\lim_{t\to 0^+}\nabla u_h^{n+1}(\bm x+t\bm n^\sharp)\cdot\bm n^\sharp+\lim_{t\to 0^+}\nabla u_h^{n+1}(\bm x+t\bm n^\flat)\cdot\bm n^\flat,\qquad \bm x\in E,
\]
the jump across~$E$.

Then, for $T\in \mathcal{T}^{h}$, defining the FEM-error
\begin{equation}
\label{Femerror}
\begin{aligned}
\eta_{n+1,T}^2&:= \alpha_{T}^2 \norm{\varepsilon \Delta (u_{h}^{n+1}-u_{h}^{n})-(u_{h}^{n+1}-u_{h}^{n})+t\varepsilon \Delta u_{h}^{n} +tf_h(u_{h}^{n})}_{0,T}^2\\
&\quad+\frac{1}{2}\sum_{E\in \mathcal{E}^{h}(T)}{\varepsilon^{-\nicefrac12}\alpha_E\norm{\varepsilon \jmp{\nabla (u_{h}^{n+1}-u_{h}^{n})+t\nabla u_{h}^{n}}}_{0,E}^2}.
\end{aligned}
\end{equation}
with~$\alpha_T$ and~$\alpha_E$ from~\eqref{boundary}, we proceed along the lines of the proof of~\cite[Theorem~4.4]{AmreinWihler:15} in order to obtain the following result.

\begin{proposition}
\label{FEM}
For $n\geq 0$ there holds the upper a posteriori bound
\begin{equation}\label{eq:help}
\NN{\tilde{e}_{h}^{n+1,n}}_{\varepsilon}\preccurlyeq \left(t\norm{f(u_{h}^n)-f_{h}(u_{h}^n)}_{0,\Omega}^2+\sum_{T\in\T_{h}}{\eta_{n+1,T}^2}\right)^{\nicefrac{1}{2}}=:\eta(u_{h}^{n+1},V_{0}^h)
\end{equation}
with $ \eta_{n+1,T}$ , $ T\in \mathcal{T}^{h}$ from \eqref{Femerror}.
\end{proposition}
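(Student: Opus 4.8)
The plan is to turn the exact identity for $\NN{\tilde{e}_{h}^{n+1,n}}_{\varepsilon}^2$ that has already been derived into a standard residual-type a posteriori bound by estimating each contribution $b_T$, $a_E$, and $c_T$ separately and then inserting the Cl\'ement interpolation estimates for $(\tilde{e}_{h}^{n+1,n}-v_h)$. Since $v_h = \I \tilde{e}_h^{n+1,n}$, the difference $\tilde{e}_h^{n+1,n}-v_h$ is precisely the interpolation error of the quasi-interpolant, and the whole estimate hinges on $\varepsilon$-robust local interpolation bounds of the form
\[
\norm{w-\I w}_{0,T}\preccurlyeq \alpha_T \NN{w}_{\varepsilon,\omega_T},\qquad
\norm{w-\I w}_{0,E}\preccurlyeq \varepsilon^{-\nicefrac14}\alpha_E^{\nicefrac12}\NN{w}_{\varepsilon,\omega_E},
\]
where $\omega_T$ and $\omega_E$ are the usual element/edge patches and the weights $\alpha_T,\alpha_E$ are exactly those defined in~\eqref{boundary}. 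These are the estimates referenced through \cite[Corollary~4.2]{AmreinWihler:15}, and getting the $\varepsilon$-powers to match the weights in~\eqref{Femerror} is the crux of the robustness.

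First I would bound the volume term: applying Cauchy--Schwarz on each $T$ to $b_T$ and $c_T$, I get the $L^2(T)$-norm of the elementwise residual $\varepsilon\Delta(u_h^{n+1}-u_h^n)-(u_h^{n+1}-u_h^n)+t\varepsilon\Delta u_h^n+tf_h(u_h^n)$ (respectively the quadrature/oscillation residual $t(f(u_h^n)-f_h(u_h^n))$) times $\norm{\tilde{e}_h^{n+1,n}-v_h}_{0,T}$. Substituting the interpolation bound converts the latter factor into $\alpha_T\NN{\tilde{e}_h^{n+1,n}}_{\varepsilon,\omega_T}$, which reproduces the $\alpha_T^2\norm{\cdots}_{0,T}^2$ weighting in the definition of $\eta_{n+1,T}^2$. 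Next I would treat the edge term $a_E$: Cauchy--Schwarz on each edge gives $\norm{\varepsilon\jmp{\nabla(u_h^{n+1}-u_h^n)+t\nabla u_h^n}}_{0,E}\cdot\norm{\tilde{e}_h^{n+1,n}-v_h}_{0,E}$, and the edge interpolation estimate turns the second factor into $\varepsilon^{-\nicefrac14}\alpha_E^{\nicefrac12}\NN{\tilde{e}_h^{n+1,n}}_{\varepsilon,\omega_E}$, which matches the $\varepsilon^{-\nicefrac12}\alpha_E$ jump weighting in~\eqref{Femerror}. Summing over all $T$ and $E$, and using Cauchy--Schwarz in the discrete $\ell^2$ sense together with the finite overlap (shape-regularity) of the patches $\omega_T,\omega_E$, I factor out one global $\NN{\tilde{e}_h^{n+1,n}}_\varepsilon$ from the sum of patch norms.

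After this step the right-hand side reads (up to the $\preccurlyeq$ constant) as $\bigl(\sum_T\eta_{n+1,T}^2 + t\,\norm{f(u_h^n)-f_h(u_h^n)}_{0,\Omega}^2\bigr)^{\nicefrac12}\cdot\NN{\tilde{e}_h^{n+1,n}}_\varepsilon$, where the data-oscillation term collects the $c_T$ contributions. Dividing both sides by $\NN{\tilde{e}_h^{n+1,n}}_\varepsilon$ then yields exactly the claimed bound $\NN{\tilde{e}_h^{n+1,n}}_\varepsilon\preccurlyeq \eta(u_h^{n+1},V_0^h)$. I expect the main obstacle to be neither the algebra nor the summation, but rather verifying that the Cl\'ement interpolation operator satisfies the $\varepsilon$-weighted, $\varepsilon$-\emph{robust} local bounds with precisely the powers of $\varepsilon$ and the cutoff weights $\alpha_T,\alpha_E$ needed so that no hidden factor of $\varepsilon$ (which could blow up as $\varepsilon\to 0$) enters the constant; this is the whole point of the singularly perturbed setting, and it is why the proof must lean on the robust interpolation result of \cite[Corollary~4.2]{AmreinWihler:15} rather than on classical (non-robust) Cl\'ement estimates. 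Once those robust local estimates are in hand, the remainder is the routine residual-bounding argument sketched above, carried out verbatim as in \cite[Theorem~4.4]{AmreinWihler:15}.
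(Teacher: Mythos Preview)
Your proposal is correct and follows essentially the same approach as the paper: the paper does not spell out the proof either but, after deriving the identity $\NN{\tilde{e}_{h}^{n+1,n}}_{\varepsilon}^{2}=\sum_{T}(b_T+c_T)+\sum_{E}a_E$, simply states that one ``proceed[s] along the lines of the proof of~\cite[Theorem~4.4]{AmreinWihler:15}'', which is exactly the Cauchy--Schwarz plus $\varepsilon$-robust Cl\'ement estimate plus finite-overlap argument you outline. The only cosmetic difference is that your bound on the $c_T$ terms would naturally produce a factor $t^2\alpha_T^2$ rather than the $t$ appearing in~\eqref{eq:help}, but since $\alpha_T\le 1$ and $t=c/L^2$ is a fixed constant, this is absorbed into the $\preccurlyeq$.
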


Using Proposition~\ref{aposteriori} together with the bound \eqref{eq:help}, we end up with the following \emph{a posteriori} error bound:
\begin{theorem}
\label{thm:1}
For $n\geq 0$ there holds the upper a posteriori error bound 
\begin{equation}\label{eq:goal}
\NN{e_{h}^{n+1}}_{\varepsilon}\preccurlyeq \eta(u_{h}^{n+1},V_{0}^h)+\eta_{\mathrm{FP}}(u_{h}^{n+1},u_{h}^{n},V_{0}^h),
\end{equation}
with 
\begin{equation}\label{eq:linearizationerror}
\eta_{\mathrm{FP}}(u_{h}^{n+1},u_{h}^{n},V_{0}^h):=\NN{u_{h}^{n+1}-u_{h}^n}_{\varepsilon},
\end{equation} 
$ T\in\mathcal{T}^h $ and $ \eta(u_{h}^{n+1},V_{0}^h) $, from \eqref{eq:help}, respectively. 
\end{theorem}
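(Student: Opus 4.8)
The plan is to combine the two a posteriori ingredients already available, namely the abstract bound from Proposition~\ref{aposteriori} and the computable FEM estimate from Proposition~\ref{FEM}, so that Theorem~\ref{thm:1} follows essentially by substitution. First I would recall that Proposition~\ref{aposteriori} gives, for $e_h^{n+1}=u-u_h^{n+1}$,
\[
\NN{e_{h}^{n+1}}_{\varepsilon}\leq \frac{L^2}{c^2}\cdot\NN{\tilde e_h^{n+1,n}}_{\varepsilon}
+\frac{L}{c}\left(1+\frac{L}{c}\right)\NN{u_h^{n+1}-u_h^{n}}_{\varepsilon},
\]
where I have rewritten the generic Hilbert-space norm $\norm{\cdot}_X$ in the present energy norm $\NN{\cdot}_\varepsilon$, which is legitimate since for this application the inner product $(\cdot,\cdot)_X$ is precisely the one inducing $\NN{\cdot}_\varepsilon$. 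The crucial observation is that the first term on the right is controlled by the discretization quantity $\NN{\tilde e_h^{n+1,n}}_\varepsilon$, for which Proposition~\ref{FEM} supplies the fully computable upper bound $\eta(u_h^{n+1},V_0^h)$.

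The key steps, in order, are as follows. I would start from the bound of Proposition~\ref{aposteriori}, then invoke Proposition~\ref{FEM} to replace $\NN{\tilde e_h^{n+1,n}}_\varepsilon$ by $\eta(u_h^{n+1},V_0^h)$ up to the hidden constant implicit in~$\preccurlyeq$. The second term is already exactly the fixed-point indicator, since by definition $\eta_{\mathrm{FP}}(u_h^{n+1},u_h^{n},V_0^h)=\NN{u_h^{n+1}-u_h^{n}}_\varepsilon$ from~\eqref{eq:linearizationerror}. Collecting the two contributions and absorbing all constants (the ratios $L/c$ and the $\varepsilon$-independent Cl\'ement/trace constants hidden in Proposition~\ref{FEM}) into the symbol $\preccurlyeq$ then yields
\[
\NN{e_h^{n+1}}_\varepsilon\preccurlyeq \eta(u_h^{n+1},V_0^h)+\eta_{\mathrm{FP}}(u_h^{n+1},u_h^{n},V_0^h),
\]
which is precisely~\eqref{eq:goal}.

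The subtle point worth flagging, rather than a genuine obstacle, is the handling of constants and their $\varepsilon$-robustness. The constant $\frac{L^2}{c^2}$ from Proposition~\ref{aposteriori} is harmless because $L=\max(1,L_f)$ and $c=\min(1,c_f)$ are independent of both $h$ and $\varepsilon$, so the ratio may be swept into $\preccurlyeq$ without spoiling robustness; the real work ensuring $\varepsilon$-robustness has already been done inside Proposition~\ref{FEM}, where the weights $\alpha_T,\alpha_E$ from~\eqref{boundary} are arranged so that the hidden constant is uniform in~$\varepsilon$. Thus the only thing I must check carefully is that the Hilbert-space norm appearing in Proposition~\ref{aposteriori} coincides with the energy norm $\NN{\cdot}_\varepsilon$ used in Proposition~\ref{FEM}, which it does by the definition of $(\cdot,\cdot)_X$ in this section. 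Given that identification, the result is immediate: no new estimate is needed, and the proof reduces to combining the two previously established propositions and renaming the two resulting terms.
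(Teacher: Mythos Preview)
Your proposal is correct and follows the same approach as the paper, which merely states that the theorem follows from Proposition~\ref{aposteriori} together with the bound~\eqref{eq:help} without giving further detail. Your version is in fact more careful: you correctly note that the relevant intermediate inequality from the proof of Proposition~\ref{aposteriori} features $\NN{\tilde e_h^{n+1,n}}_\varepsilon$ (rather than the abstract $\eta_h$), which is precisely the quantity bounded by Proposition~\ref{FEM}, and you verify that the constants $L/c$ are $\varepsilon$- and $h$-independent so that absorbing them into $\preccurlyeq$ is legitimate.
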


\subsection{A Fully Adaptive Fixed-Point-Galerkin Algorithm}
We will now propose a procedure that will combine the fixed point iteration procedure with an automatic finite element mesh refinement strategy. More precisely, based on the \emph{a posteriori} error bound from Theorem~\ref{thm:1}, the main idea of our approach is to provide an interplay between the fixed point iterations and adaptive mesh refinements which is based on monitoring the error indicators in~\eqref{eq:linearizationerror} and~\eqref{eq:help}, and on acting according to whatever quantity is dominant in the current computations. 
 
The individual computational steps are summarized in Algorithm~\ref{al:full}.

\begin{algorithm}
\caption{Fully-adaptive Fixed-Point-Galerkin method}
\label{al:full}
\begin{algorithmic}[1]
\State Initialization: Input an initial starting space $V_0^h$, a lower bound $h_{\text{min}}>0$ for the fineness parameters $h$, an initial guess $u_{h}^{0}\in V_0^h$, and a refinement parameter $\theta>0$. 
\While {$h\geq h_{\text{min}}$} \Comment{control the degrees of freedom}
\myState {On the current space~$ V_{0}^h $, compute $ u_{h}^{n+1} $, and evaluate the error indicators $\eta(u_{h}^{n+1},V_{0}^h)$ and $\eta_{\mathrm{FP}}(u_{h}^{n+1},u_{h}^n,V_{0}^h)$ from~\eqref{eq:help}, and \eqref{eq:linearizationerror}.}

\If {$\eta_{\mathrm{FP}}(u_{h}^{n+1},u_{h}^n,V_{0}^h)< \theta \cdot  (\eta(u_{h}^{n+1},V_{0}^h)$}  
\myStateTriple{Refine the mesh $\mathcal{T}^h$ adaptively based on the element wise error indicators $\eta_{n+1,T}$, $T\in \mathcal{T}^h$, from Proposition \ref{FEM}, obtain a new mesh $\mathcal{T}^{h}\subset\mathcal{T}^{\tilde{h}}$, set $h:=\tilde{h}$ and go back to step $(2)$ with the previously computed solution $u_{h}^{n+1}$ as interpolated on the refined mesh $\mathcal{T}^{\tilde{h}}$.}

\Else 
\myStateTriple {do another fixed point iteration by going back to step $(3)$.}
\EndIf
\myState{set~$n\leftarrow n+1$.}
\EndWhile
\end{algorithmic}
\end{algorithm}

\section{Numerical Experiment}
\label{sec:numerics}
We will now illustrate and test the above fully adaptive Algorithm~\ref{al:full} with a numerical experiment in 2d. The linear systems resulting from the finite element discretization~\eqref{eq:algebraic} are solved by means of a direct solver.

\begin{figure}
\includegraphics[width=0.43\textwidth]{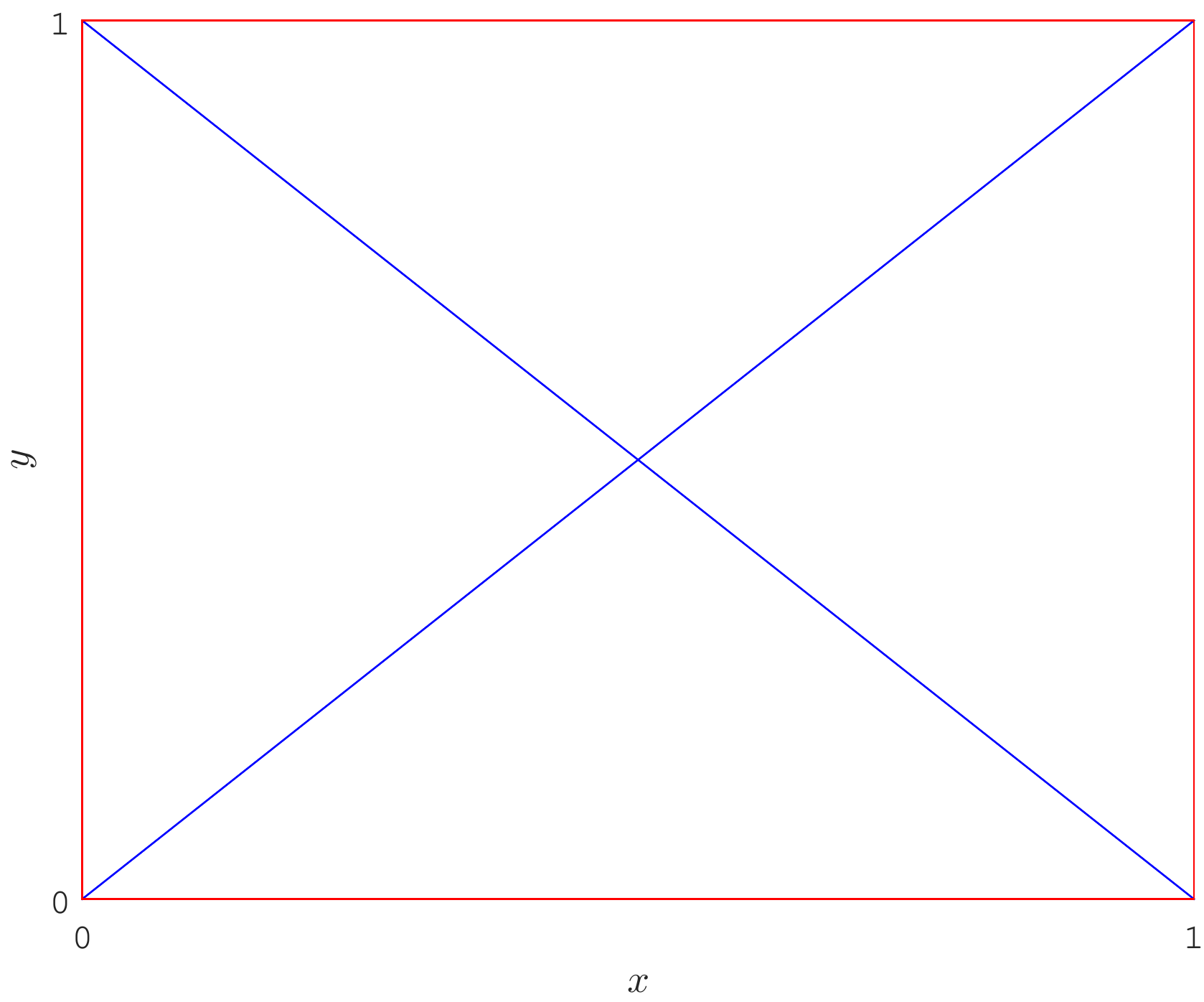}
\hfill
\includegraphics[width=0.43\textwidth]{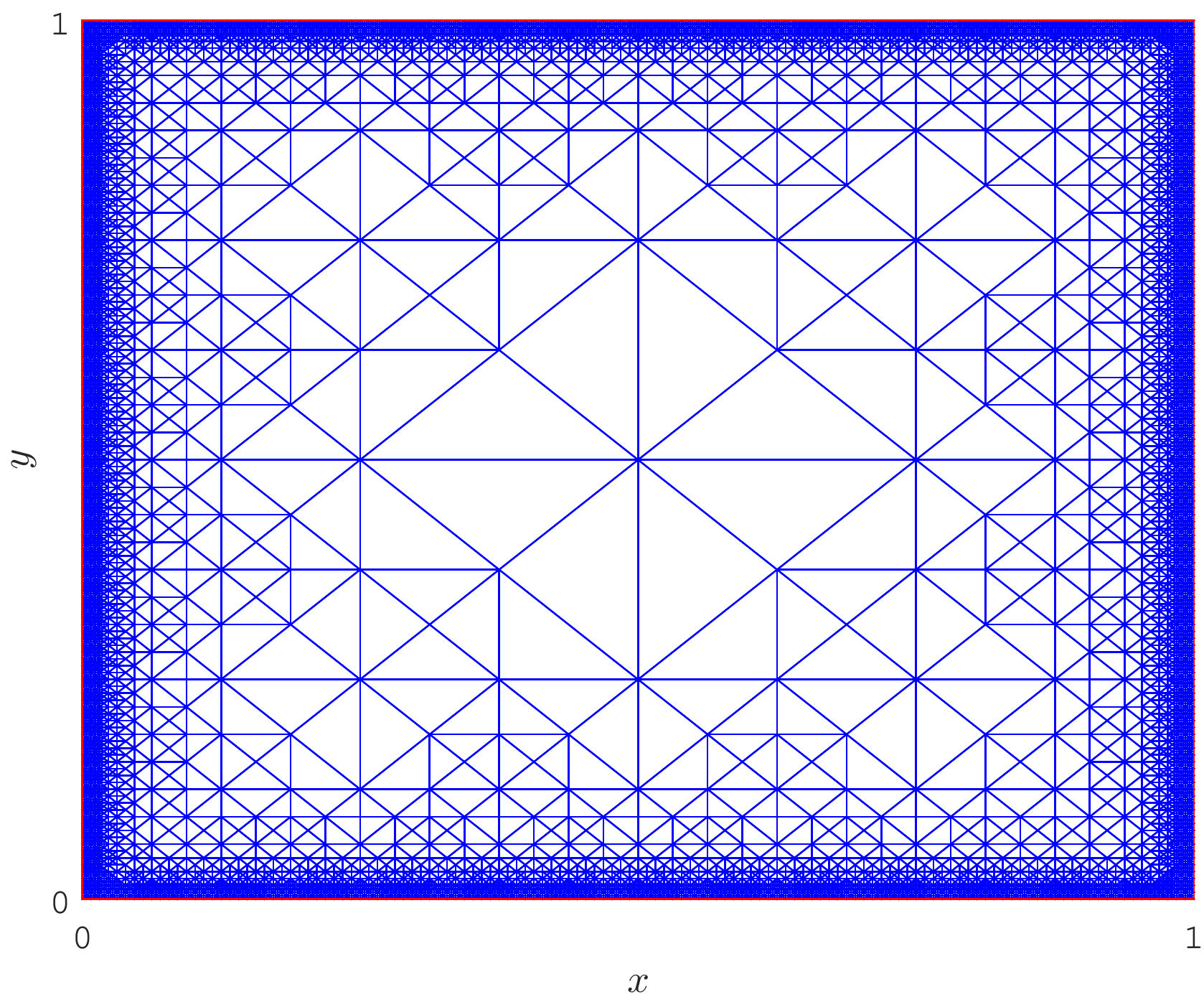}
\caption{Example~\ref{ex:2} for $\varepsilon =10^{-7}$: Initial mesh for the numerical solution (left), and the mesh corresponding to the numerical solution (right).}	
\label{Example2}
\end{figure}

\begin{example}\label{ex:2}
We consider the problem
\begin{equation}\label{eq:problem2}
-\varepsilon \Delta u = f(u), \quad \text{in} \quad \Omega=(0,1)^2, \quad u = 0 \quad \text{on} \quad  \partial \Omega,
\end{equation}
with $ f(u)=\frac{1-u}{1+\mathrm{e}^{-(u-1)^2}}$. Here $ \abs{\partial_{u}f(u)}$ is uniformely bounded (roughly by $1.3$) and 
\[\partial_{u}f(u)\leq \partial_{u} f(u)|_{u=1}=-\nicefrac{1}{2}.\] Henceforth we have $L=L_{f}\approx 1.3$, $c=c_{f}=\nicefrac{1}{2}$, i.e., the problem is well defined. 

The focus of this experiment is on the robustness of the \emph{a posteriori} error bound \eqref{thm:1} with respect to the singular perturbation parameter $\varepsilon$ as 
$\varepsilon \to 0$.
Indeed, neglecting the boundary conditions for a moment, one observes that the unique positive zero $ u = 1$ of $ f(u) $ is a solution of the PDE. We therefore expect boundary layers along $\partial \Omega$; see Figures \ref{Example2} and \ref{Example2perf} (right).

Starting from the initial mesh depicted in Figure \ref{Example2} (left) with $u_{0}^{h}(\nicefrac{1}{2},\nicefrac{1}{2})= 1$, we test the fully adaptive fixed point Galerkin Algorithm \ref{al:full} for different choices of $\varepsilon=\{10^{-i}\}_{i=0}^{8}$. 
In Algorithm \ref{al:full} the parameter $\theta$ is chosen to be $0.5$. Furthermore, in this example, the procedure is always initiated with a uniform initial mesh $\mathcal{T}$ consisting of $4$ elements; see Figure \ref{Example2} (left). As $\varepsilon\to 0 $ the resulting solutions feature ever stronger boundary layers; see Figures \ref{Example2} and \ref{Example2perf} \ (right). Again we see that the performance data in Figure~ 
\ref{Example2perf} shows errors decay, firstly, robust in $\varepsilon$, and, secondly, of (optimal) order~$\nicefrac{1}{2}$ with respect to the number of degrees of freedom.

\end{example}

\begin{figure}[h!]
\includegraphics[width=0.43\textwidth]{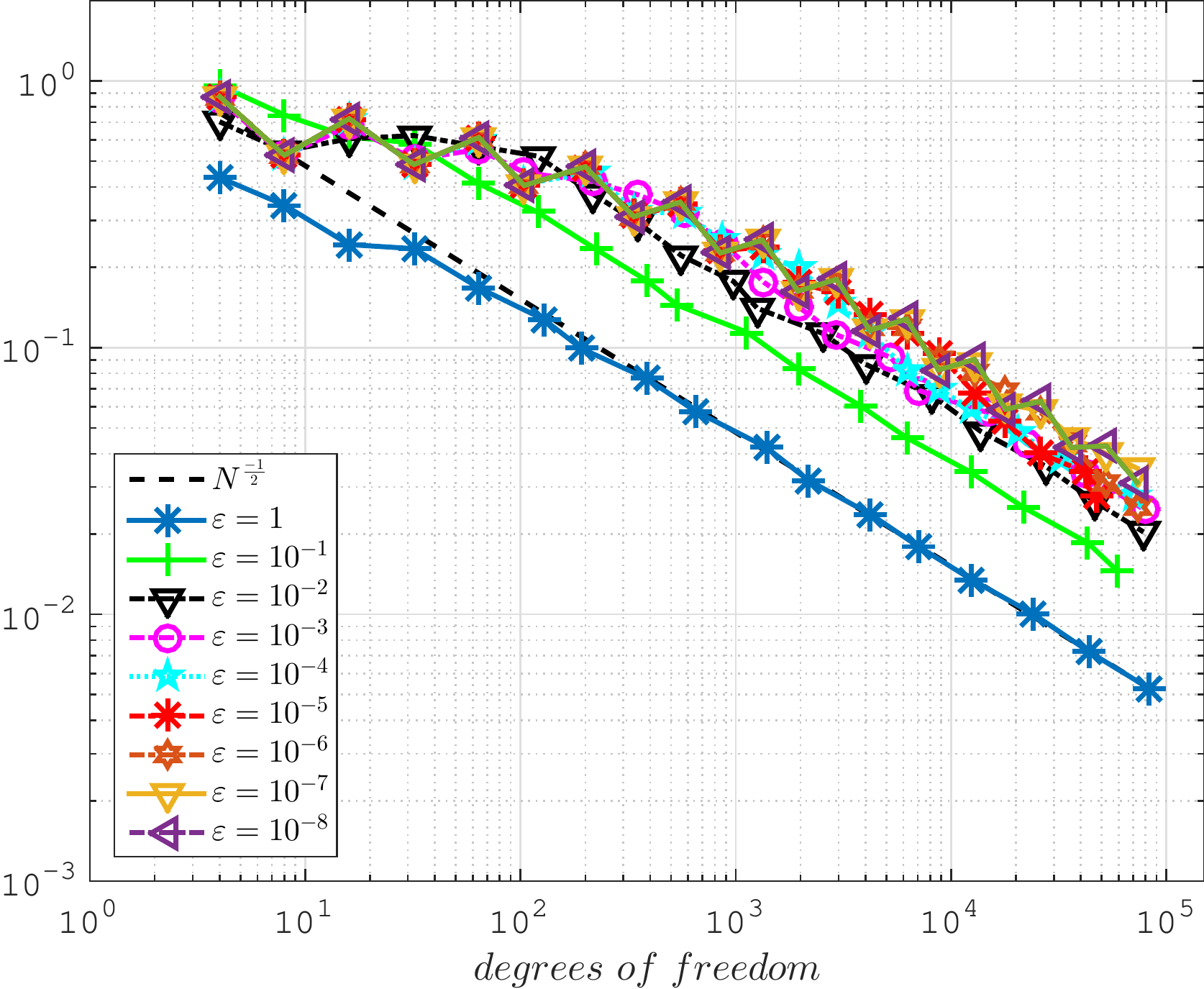}
\hfill
\includegraphics[width=0.55\textwidth]{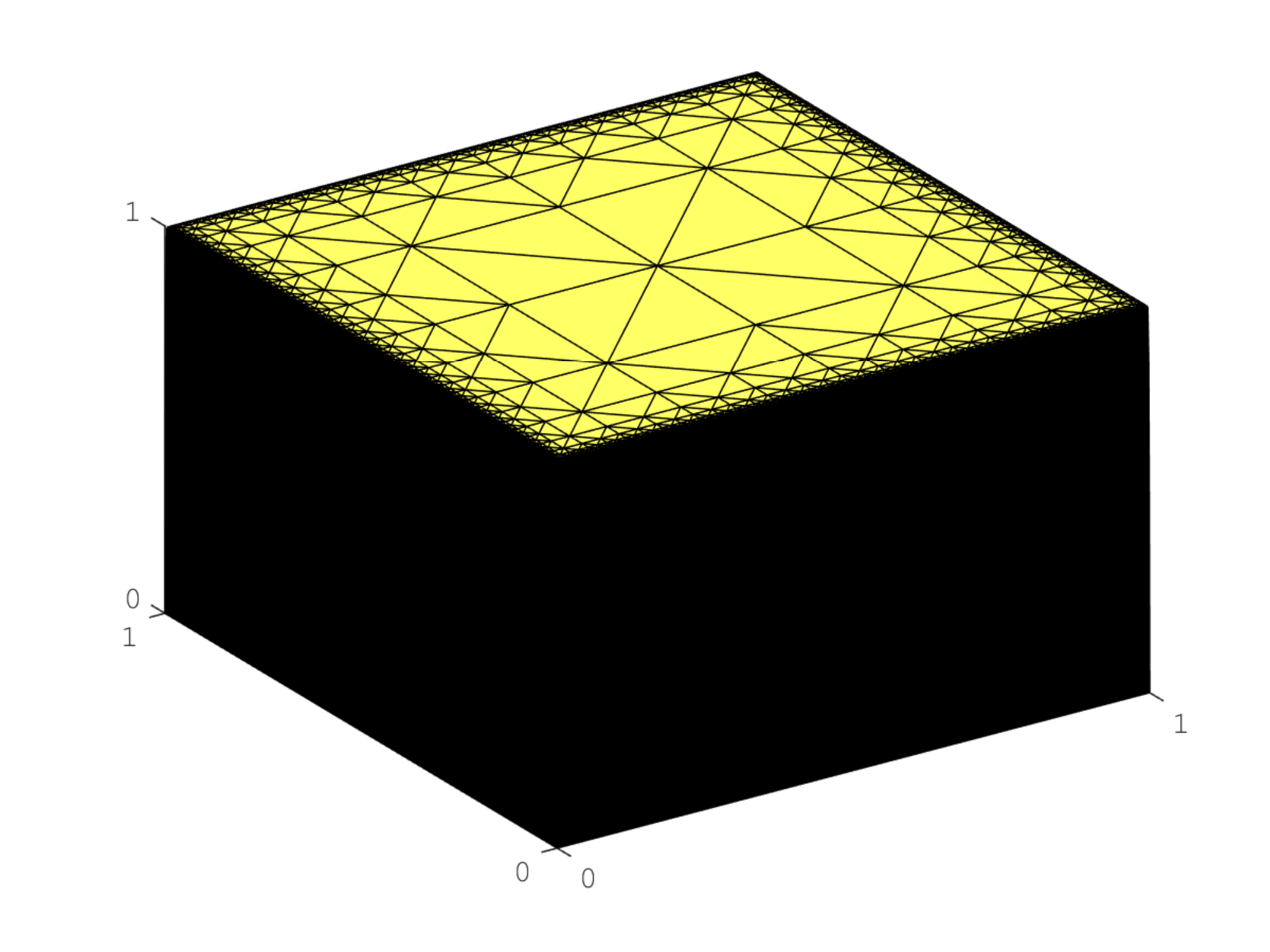}
\caption{Example~\ref{ex:2}: Estimated errors for different choices of $\varepsilon$ (left) and the numerical solution of \ref{eq:problem2} for $\varepsilon \ll 1$ (right).}	
\label{Example2perf}
\end{figure}

\section{Conclusions}
\label{sc:concl}
The aim of this paper is to introduce a reliable and computationally feasible procedure for the numerical solution of semilinear elliptic boundary value problems, with possible singular perturbations. The key idea is to combine a simple fixed point method with an automatic mesh refinement finite element procedure. Furthermore, the sequence of linear problems resulting from the application of the fixed point iteration and Galerkin discretization is treated by means of a robust (with respect to the singular perturbations) {\em a posteriori} error analysis and a corresponding adaptive mesh refinement process. Our numerical experiments clearly illustrate the ability of our approach to reliably find the solution of the underlying well posed problem, and to robustly resolve the singular perturbations at an optimal rate.

\bibliographystyle{amsplain}
\bibliography{FixedPoint-references}
\end{document}